\newtheorem{theorem}{\bf Theorem}
\newtheorem{remark}{\bf Remark}
\newtheorem{proposition}{\bf Proposition}
\newtheorem{lemma}{\bf Lemma}
\newcommand{\bx}{{\bf x}}
\newcommand{\by}{{\bf y}}
\newcommand{\bz}{{\bf z}}
\newcommand{\bu}{{\bf u}}
\newcommand{\bv}{{\bf v}}
\newcommand{\bw}{{\bf w}}
\newcommand{\be}{{\bf e}}
\newcommand{\bq}{{\bf q}}
\newcommand{\bn}{{\bf n}}
\newcommand{\boldxi}{\boldsymbol \xi}
\newcommand{\boldeta}{\boldsymbol \eta}
\newcommand{\boldtheta}{\boldsymbol \theta}
\newcommand{\PP}{\mathbb{P}}
\newcommand*{\LONGVERSION}{}
\title{\LARGE \bf
Rate of Prefix-free Codes in LQG Control Systems
}
\author{Takashi Tanaka, Karl Henrik Johansson, Tobias Oechtering, Henrik Sandberg, Mikael Skoglund\\
School of Electrical Engineering, KTH Royal Institute of Technology 
}
\begin{document}

\maketitle
\thispagestyle{empty}
\pagestyle{empty}

\begin{abstract}
In this paper, we consider a discrete time linear quadratic Gaussian (LQG) control problem in which state information of the plant is encoded in a variable-length binary codeword at every time step, and a control input is determined based on the codewords generated in the past.
We derive a lower bound of the rate achievable by the class of prefix-free codes attaining the required LQG control performance. This lower bound coincides with the infimum of a certain directed information expression, and is computable by semidefinite programming (SDP). 
Based on a technique by Silva et al., we also provide an upper bound of the best achievable rate by constructing a controller equipped with a uniform quantizer with subtractive dither and Shannon-Fano coding.
The gap between the obtained lower and upper bounds is less than $0.754r+1$ bits per time step regardless of the required LQG control performance, where $r$ is the rank of a signal-to-noise ratio matrix obtained by SDP, which is no greater than the dimension of the state.
\end{abstract}

\section{Introduction}

Motivated by control systems implemented by digital computers, we consider a discrete-time optimal control problem in which  state information of the plant is encoded using a variable-length binary sequence (codeword) at every time step, and control actions are determined based on the codewords generated in the past.
The performance of such a control system is characterized in a trade-off between control theoretic (e.g., the LQG control cost, $\gamma$) and communication theoretic (e.g., expected codeword length in average, $\mathsf{R}$) criteria. We say that a pair $(\gamma, \mathsf{R})$ is achievable if there exists a design attaining the control performance $\gamma$ and the rate $\mathsf{R}$. 
Understanding  the achievable $(\gamma, \mathsf{R})$ region is of great interest from both theoretical and practical perspectives. 

Unfortunately, explicit descriptions of the achievable regions are rarely available, even for relatively simple control problems. 
Consequently, our focus is to obtain tight inner and outer bounds of the region, or equivalently, upper and lower bounds of the trade-off function $\mathsf{R}(\gamma)$ that carves out the achievable region.
Recently, Silva et al. \cite{silva2011} showed that the rate of an arbitrary prefix-free code, if it is used in feedback control, is lower bounded by the \emph{directed information} from the output $y$ of the plant to the control input $u$.
They also showed that the conservativeness of this lower bound is strictly less than $\frac{1}{2}\log\frac{2\pi e}{12}+1\approx 1.254$ bits per time step, by constructing an entropy coded dithered quantizer (ECDQ) achieving this performance.
These observations suggest that directed information is a relevant quantity to study the best achievable rate by prefix-free codes in a control system.

Following this suggestion, we start our discussion with a characterization of the minimum directed information, denoted by $\mathsf{DI}(\gamma)$,\footnote{This quantity is related to the sequential rate-distortion function \cite{tatikonda2004}.} that needs to be ``processed" by any control law (without quantization and coding) in order to achieve the desired LQG control performance $\gamma$.
It turns out that finding $\mathsf{DI}(\gamma)$ is a convex optimization problem, and the previous discussion implies $\mathsf{DI}(\gamma) \leq \mathsf{R}(\gamma)$.
Then, invoking the idea of \cite{silva2011} and \cite{derpich2012}, we construct a control system involving an ECDQ that attains the LQG control performance $\gamma$.
Using standard properties of dithered quantizers \cite{zamir1992universal}, we then show that the rate of the designed controller is less than $\mathsf{DI}(\gamma)+\frac{r}{2}\log\frac{4\pi e}{12}+1$, where $r$ is some integer no greater than the state space dimension of the plant.
This establishes upper and lower bounds of $\mathsf{R}(\gamma)$ as
\[
\mathsf{DI}(\gamma) \leq \mathsf{R}(\gamma) < \mathsf{DI}(\gamma)+\frac{r}{2}\log\frac{4\pi e}{12}+1,
\]
which is the main result of this paper.

Our result is applicable to MIMO plants, while the result of \cite{silva2011} is restricted to SISO plants.
The restriction there is due to the difficulty of obtaining an analytical expression of $\mathsf{DI}(\gamma)$ in MIMO cases and a systematic method to design vector quantizers.\footnote{In \cite{silva2011}, it is suggested to reformulate a rate-constrained control problem with an SNR-constrained control problem. See, e.g., \cite{elia2004bode,braslavsky2007feedback} for a related discussion. 
A quantizer is then designed to match the optimal SNR.
However, the SNR of MIMO quantizers are matrix-valued in general, and no result is available to obtain an optimal matrix-valued SNR.}  In this regard, a key contribution of this paper is the use of semidefinite programming (SDP)  \cite{1411.7632}, both in the computation of $\mathsf{DI}(\gamma)$ and in the construction of an ECDQ.\footnote{This technique was facilitated by the recent advancements in the sequential rate-distortion theory \cite{1510.04214, charalambous2014nonanticipative}.}
Although we considered fully observable plants in this paper, the results can be generalized to partially observable plants using an
SDP-based solution to the sequential rate-distortion problem for partially observable sources \cite{srdpartially}.

Throughout the paper, we consider uniform quantizers  simply in the interest of mathematical ease of analysis to obtain an upper bound.
Optimal quantizer design in general requires much more involved procedures. For instance,  problems over memoryless noisy channels with finite input alphabets are considered in \cite{bao2011iterative}, where an iterative encoder/controller design procedure is proposed. 
General treatments of joint quantizer/controller design, discussions towards structural results of optimal policies, and a historical review of related problems are available in  \cite[Ch. 10,11]{yuksel2013stochastic}.

After the problem formulation in Section~\ref{secformulation}, we derive a lower bound of $\mathsf{R}(\gamma)$ is Section~\ref{seclb}. We propose a concrete quantizer/controller design in Section~\ref{secq}, whose performance is analyzed in Section~\ref{secanalysis} to derive the main result.
 
%

\section{Problem formulation}
\label{secformulation}
We assume that the plant in Figure~\ref{fig:fb} has a linear time-invariant state space model
\begin{equation}
\label{eqsystem}
\bx_{t+1}=A\bx_t+B\bu_t+\bw_t
\end{equation}
where matrices $A\in\mathbb{R}^{n\times n}$ and $B\in\mathbb{R}^{n\times m}$ are known, the initial state $\bx_1\sim\mathcal{N}(0,P_{1|0})$ has a known prior with $P_{1|0}\succ 0$, and the process noise $\bw_t\sim\mathcal{N}(0,W)$ is i.i.d. with known $W\succ 0$.
At every time step $t=1,2,\cdots$, the ``sensor+encoder" block observes the state $\bx_t$ and produces a single codeword $\bz_t$ from a predefined set $\mathcal{Z}_t$ of at most countable codewords. 
Upon receiving $\bz_t$, the ``decoder+controller" block produces a control input $\bu_t$. In what follows, the ``sensor+encoder" block is simply referred to as the \emph{encoder}, and likewise the ``decoder+controller" block as the \emph{decoder}. 
Both encoder and decoder are allowed to have infinite memories of the past.
We assume there is no delay due to encoding and decoding processes.

In this paper, we restrict ourselves to the class of prefix-free (instantaneous) binary codewords $\bz_t$. We allow the codebook $\mathcal{Z}_t$ to be time-varying and countably infinite set (hence $\bz_t$ can be an arbitrarily long binary sequence).
We design a variable-length code where the length of the codeword $\bz_t$ generated at time step $t$ is a random variable denoted by $l_t$.

\begin{remark}
Note that prefix-free may not always be a strict requirement for a code used in feedback control, although this assumption is taken for granted in the previous work \cite{silva2011}.
For instance, if both the encoder and decoder have access to a common clock signal, and know that only one codeword is generated at a time, a set of codewords
\[
\{\phi\text{ (zero-length codeword)}, 0, 1, 00, 01, 10, 11, 000, \cdots \}
\]
can be used to decode a message without any confusion, even though these codewords are not uniquely decodable. Nevertheless, there are several practical advantages of using prefix-free codes. For instance, prefix-free allows us to decode without referring to the common clock signal, which may simplify the implementation of the algorithm. Thus, the analysis in this paper is restricted to prefix-free binary codes.\end{remark}
\begin{figure}[t]
    \centering
    \includegraphics[width=0.9\columnwidth]{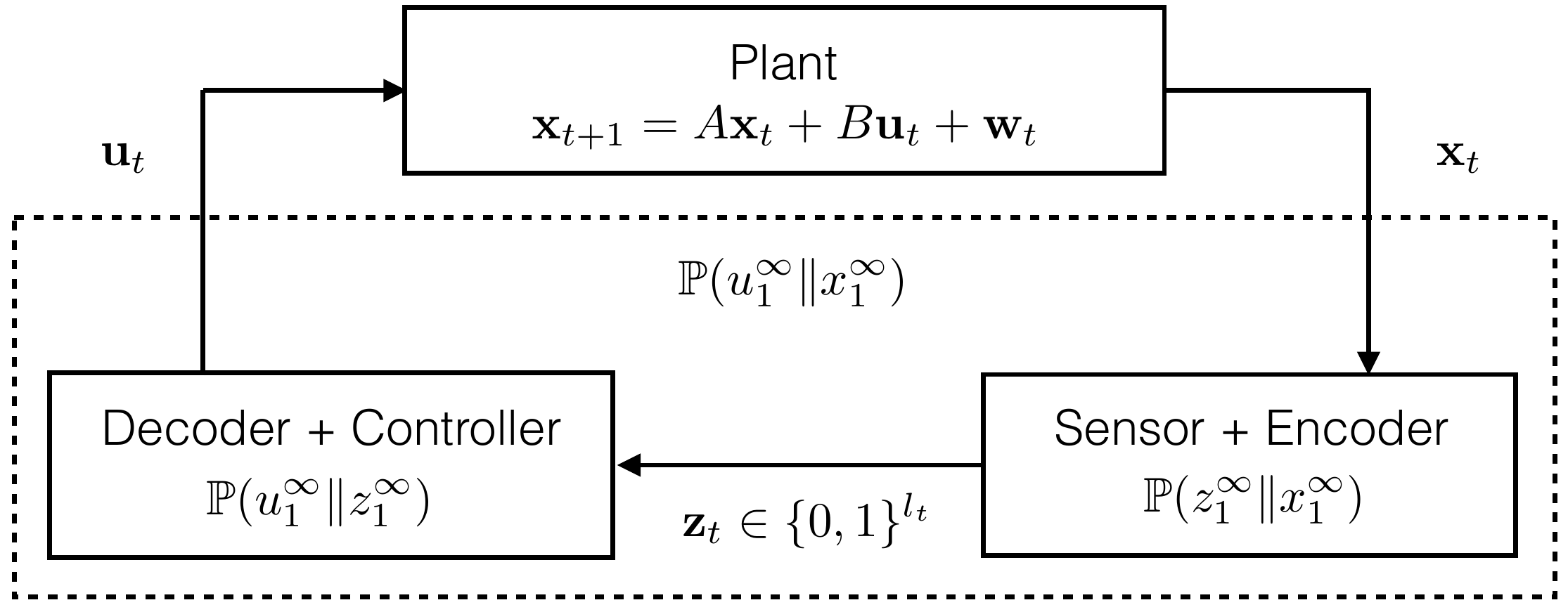} 
    \vspace{-1ex} 
    \caption{Feedback control using variable-length binary codewords.}
    \label{fig:fb}
    \vspace{-3ex}
\end{figure} 

We consider a joint design of a set of codewords $\mathcal{Z}_t$ for each $t$, the encoder's policy $\PP(z_1^\infty \| x_1^\infty)$, and the decoder's policy $\PP(u_1^\infty\|z_1^\infty)$. 
Here, we use Kramer's notation \cite{kramer2003capacity} for the sequence of causally conditioned Borel measurable stochastic kernels 
\begin{align}
\PP(z_1^\infty\| x_1^\infty) &= \{\PP(z_t|x^t,z^{t-1})\}_{t=1,2,\cdots} \label{eqkernel1}\\
\PP(u_1^\infty\| z_1^\infty) &= \{\PP(u_t|z^t,u^{t-1})\}_{t=1,2,\cdots}. \label{eqkernel2}
\end{align}
The purpose of our design is two-fold. First, we require that the overall control system achieves the LQG control cost $\leq \gamma$. Second, the expected codeword length on average is minimized. The optimization problem of our interest is 
\begin{align}
\mathsf{R}(\gamma)\triangleq \inf & \limsup_{T\rightarrow +\infty}\frac{1}{T}\sum\nolimits_{t=1}^T\mathbb{E} (l_t) \label{optRD} \\
\text{s.t. } & \limsup_{T\rightarrow +\infty}\frac{1}{T}\sum\nolimits_{t=1}^T\mathbb{E}\left(\|\bx_{t+1}\|_{Q}^2 + \|\bu_t\|_{R}^2 \right)\leq \gamma \nonumber
\end{align}
where $Q\succ 0$ and $R\succ 0$.
Expectations are evaluated with respect to the probability law induced by (\ref{eqsystem}), (\ref{eqkernel1}) and (\ref{eqkernel2}).
We assume that $(A,B)$ is stabilizable and $(A,Q)$ is detectable. 



\section{Lower Bound}
\label{seclb}

For every $\gamma>0$, define a function $\mathsf{DI}(\gamma)$ as the optimal value of the following convex optimization problem.
\begin{align}
\mathsf{DI}(\gamma) \triangleq \inf & \limsup_{T\rightarrow +\infty} \frac{1}{T}I(\bx^T\rightarrow \bu^T) \label{optdirectedinfo}\\
\text{s.t.} & \limsup_{T\rightarrow +\infty}\frac{1}{T}\sum\nolimits_{t=1}^T \mathbb{E}\left(\|\bx_{t+1}\|_{Q}^2 + \|\bu_t\|_{R}^2 \right)\leq \gamma. \nonumber
\end{align}
We use Massey's definition of the \emph{directed information} \cite{massey1990causality}:
\[
I(\bx^T\rightarrow \bu^T)\triangleq \sum\nolimits_{t=1}^T I(\bx^t;\bu_t|\bu^{t-1}).
\]
The infimum in (\ref{optdirectedinfo}) is taken over the sequence of causally conditioned Borel measurable stochastic kernels
\[
\PP(u_1^\infty\|x_1^\infty)\triangleq \{ \PP(u_t|x^t,u^{t-1})\}_{t=1,2,\cdots}.
\]
Under the aforementioned stabilizability/detectability assumption,  the optimization problem (\ref{optdirectedinfo}) is always feasible and $\mathsf{DI}(\gamma)<+\infty$. However, there is no need to solve an infinite-dimensional optimization problem (\ref{optdirectedinfo}) to compute $\mathsf{DI}(\gamma)$. 
\begin{proposition}
\label{propsdp}
(\cite{1510.04214})
Let $S$ be the unique positive definite solution to the algebraic Riccati equation
\begin{equation*}
A^\top SA-S-A^\top SB(B^\top SB+R)^{-1}B^\top SA+Q=0,
\end{equation*}
and $K\!\triangleq\! -(B^\top SB\!+\!R)^{-1}B^\top SA$, $\Theta\!\triangleq \!K^\top (B^\top SB\!+\!R)K$. Then $\mathsf{DI}(\gamma)$ is computable by semidefinite programming:
\begin{align}
\mathsf{DI}(\gamma)=\min_{P, \Pi\succ 0} & \quad \frac{1}{2} \log\det \Pi^{-1} + \frac{1}{2} \log \det W \label{optsdr}\\
\text{s.t.} & \quad \text{Tr}(\Theta P) + \text{Tr}(W S) \leq \gamma, \nonumber \\
& \quad  P\preceq A P A^\top +W, \nonumber \\
&\hspace{1ex} \left[\!\! \begin{array}{cc}P-\Pi \!\!\! &\!\! PA^\top \nonumber \\
AP \!\!\!&\!\! A PA^\top +W \end{array}\!\!\right]\! \succeq\! 0. \nonumber
\end{align} 
Let $P(\gamma)$ be an optimal solution to (\ref{optsdr}), and define
$
\mathsf{SNR}(\gamma)\triangleq P(\gamma)^{-1}-(AP(\gamma)A^\top+W)^{-1}$, and $r\triangleq \text{rank}(\mathsf{SNR}(\gamma))$.
Let $C\in\mathbb{R}^{r\times n}$ be a matrix with orthonormal columns and $V\in\mathbb{S}^r_{++}$ be a diagonal matrix satisfying $C^\top V^{-1} C=\mathsf{SNR}(\gamma)$. Then, an optimal solution $\PP(u_1^\infty\| x_1^\infty)$ to (\ref{optdirectedinfo}) can be realized by (i) an additive white Gaussian noise channel $\by_t=C\bx_t+\bv_t$ where $\bv_t\sim\mathcal{N}(0,V)$ is i.i.d., (ii) a Kalman filter $\hat{\bx}_t=\mathbb{E}(\bx_t|\by^t, \bu^{t-1})$, and (iii) a certainty equivalence controller $\bu_t=K\hat{\bx}_t$.  An equivalent block diagram is shown in  Figure~\ref{fig:threestage}.
\end{proposition}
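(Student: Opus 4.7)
The plan is to split the argument into a converse (showing the SDP value lower-bounds $\mathsf{DI}(\gamma)$) and an achievability (showing the proposed AWGN-Kalman-certainty-equivalence realization attains it).

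First I would reduce the LQG constraint to a pure estimation-error constraint. Using the algebraic Riccati equation for $S$ and the standard completing-the-square identity
\[
\mathbb{E}\|\bx_{t+1}\|_Q^2 + \mathbb{E}\|\bu_t\|_R^2 = \mathbb{E}\|\bx_t\|_S^2 - \mathbb{E}\|\bx_{t+1}\|_S^2 + \text{Tr}(WS) + \mathbb{E}\|\bu_t - K\bx_t\|_{B^\top SB + R}^2,
\]
summing over $t$ and taking the Ces\`aro limit eliminates the telescoping boundary terms under stabilizability. Writing $\bu_t - K\bx_t = (\bu_t - K\hat{\bx}_t) - K(\bx_t - \hat{\bx}_t)$ with $\hat{\bx}_t = \mathbb{E}(\bx_t|\bu^t)$ splits the residual into two orthogonal pieces, and the certainty-equivalence choice $\bu_t = K\hat{\bx}_t$ eliminates the first without loss of optimality. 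The LQG constraint thus reduces to $\limsup_T \tfrac{1}{T}\sum_t \text{Tr}(\Theta P_t) \le \gamma - \text{Tr}(WS)$, where $P_t = \text{Cov}(\bx_t - \hat{\bx}_t)$.

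Next I would establish the information-theoretic converse. Each summand of the directed information obeys
\[
I(\bx^t;\bu_t|\bu^{t-1}) \ge \tfrac{1}{2}\log\det(A P_{t-1} A^\top + W) - \tfrac{1}{2}\log\det P_t,
\]
which I would derive from the Gauss-Markov structure $\text{Cov}(\bx_t|\bu^{t-1}) = A P_{t-1} A^\top + W$ together with the conditional entropy power inequality and the maximum-entropy principle, as in the standard sequential rate-distortion converse. Summing, passing to Ces\`aro limits, and restricting to stationary regimes reduces $\mathsf{DI}(\gamma)$ to
\[
\min_{P\succ 0} \tfrac{1}{2}\log\det(APA^\top + W) - \tfrac{1}{2}\log\det P
\]
subject to $\text{Tr}(\Theta P)\le \gamma - \text{Tr}(WS)$ and the steady-state feasibility $P \preceq APA^\top + W$. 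A Schur-complement change of variables $\Pi \preceq P - PA^\top(APA^\top + W)^{-1}AP = (P^{-1} + A^\top W^{-1} A)^{-1}$, encoded as the LMI in the proposition, combined with the determinant identity $\det W \cdot \det(P^{-1} + A^\top W^{-1} A) = \det(APA^\top + W)/\det P$, rewrites the objective as $\tfrac{1}{2}\log\det\Pi^{-1} + \tfrac{1}{2}\log\det W$; since $\log\det\Pi^{-1}$ is monotone decreasing in $\Pi$, the relaxation is tight at the boundary.

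For achievability I would choose $C$ and $V$ so that $C^\top V^{-1} C = P(\gamma)^{-1} - (A P(\gamma) A^\top + W)^{-1}$; a direct Kalman-update check then shows that $P(\gamma)$ is the steady-state posterior covariance under the AWGN measurement $\by_t = C\bx_t + \bv_t$, and the certainty-equivalence controller attains the LQG cost bound via the first step. Because $\bu_t$ is a deterministic function of $\by^t$, the data-processing inequality gives $I(\bx^T\to \bu^T) \le I(\bx^T;\by^T)$, and for the jointly Gaussian AWGN channel this mutual information evaluates in closed form to exactly the SDP value, closing the gap. The main obstacle throughout is the converse inequality above, since admissible policies need not induce jointly Gaussian distributions; this is precisely where the EPI argument in \cite{1510.04214} is essential.
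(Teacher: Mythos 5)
The paper itself offers no proof of Proposition~\ref{propsdp}; it is imported verbatim from \cite{1510.04214}, so the only available comparison is with the argument in that reference. Your overall architecture matches it: completion of squares with the Riccati solution $S$ to convert the LQG constraint into $\limsup_T \frac{1}{T}\sum_t \mathrm{Tr}(\Theta P_t) \le \gamma - \mathrm{Tr}(WS)$, an information-theoretic converse in terms of the error covariances, the Schur-complement/determinant-identity reformulation into the stated SDP (your tightness argument via monotonicity of $\log\det\Pi^{-1}$ is correct), and achievability by checking that $P(\gamma)$ is a fixed point of the Riccati/information-form Kalman update under $C^\top V^{-1}C = P(\gamma)^{-1} - (AP(\gamma)A^\top + W)^{-1}$. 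Those parts are sound.

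The genuine gap is the per-step converse inequality
\[
I(\bx^t;\bu_t\,|\,\bu^{t-1}) \;\ge\; \tfrac{1}{2}\log\det(AP_{t-1}A^\top + W) - \tfrac{1}{2}\log\det P_t,
\]
which is \emph{false} as stated for general (non-Gaussian) admissible policies. The term $-h(\bx_t|\bu^t) \ge -\tfrac{1}{2}\log\det(2\pi e P_t)$ does follow from max-entropy plus Jensen, but the companion bound $h(\bx_t|\bu^{t-1}) \ge \tfrac{1}{2}\log\det\bigl(2\pi e(AP_{t-1}A^\top + W)\bigr)$ goes the \emph{wrong way}: max-entropy only upper-bounds a conditional entropy by the Gaussian value for its covariance. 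Concretely, a policy whose past outputs $\bu^{t-1}$ pin $\bx_{t-1}$ down to a cell of width $\delta$ gives $h(\bx_t|\bu^{t-1}) \approx \tfrac{1}{2}\log(12\cdot\delta^2/12)$ while the claimed lower bound is $\approx\tfrac{1}{2}\log(2\pi e\,\delta^2/12)$, and $2\pi e > 12$. The EPI does not rescue the per-step statement either, since it lower-bounds $h(A\bx_{t-1}+\bw_{t-1}|\bu^{t-1})$ in terms of the \emph{entropy power} of $\bx_{t-1}$ given $\bu^{t-1}$, not its covariance. The actual converse in \cite{1510.04214} therefore telescopes: setting $\nu_t = h(\bx_t|\bu^t)$, the conditional EPI yields $h(\bx_t|\bu^{t-1}) \ge \tfrac{n}{2}\log\bigl(|\det A|^{2/n}e^{2\nu_{t-1}/n} + 2\pi e(\det W)^{1/n}\bigr)$, so a policy that makes $h(\bx_{t-1}|\bu^{t-1})$ sub-Gaussian for its covariance pays for it in the time-$(t-1)$ term; one then optimizes the resulting recursion over $\{\nu_t\}$ and shows the optimum is attained by the Gaussian values. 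You correctly identified the EPI step as the crux, but the specific inequality you propose to carry it out would not survive scrutiny and needs to be replaced by this coupled, multi-step argument.
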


The next result, appearing in \cite[Theorem 4.1]{silva2011}, claims that $\mathsf{DI}(\gamma)$ provides a lower bound of $\mathsf{R}(\gamma)$.
\begin{theorem}
\label{theolower}
For every $\gamma > 0$, we have $\mathsf{DI}(\gamma) \leq \mathsf{R}(\gamma)$.
\end{theorem}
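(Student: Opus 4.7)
The plan is to fix an arbitrary encoder/decoder pair that is feasible for (\ref{optRD}) with expected rate $\mathsf{R}$, and show per-stage that $\mathbb{E}(l_t) \geq I(\bx^t;\bu_t\,|\,\bu^{t-1})$. Summing and taking the Cesàro limit will give a directed information bound, and feasibility of the induced kernel for (\ref{optdirectedinfo}) will deliver the claim. The main ingredients are Kraft's inequality for prefix-free codes and the Markov structure implied by the decoder kernel (\ref{eqkernel2}).

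First I would observe that, because each $\bz_t$ takes values in an at most countable set $\mathcal{Z}_t$ and is prefix-free (so its lengths satisfy Kraft's inequality), the usual source-coding bound yields $\mathbb{E}(l_t) \geq H(\bz_t)$. Since conditioning cannot increase entropy, this in turn gives $\mathbb{E}(l_t)\geq H(\bz_t\,|\,\bu^{t-1})$. The decoder kernel (\ref{eqkernel2}) implies the Markov chain $\bx^t \to (\bz^t,\bu^{t-1}) \to \bu_t$, so a standard data-processing step gives
\begin{equation*}
I(\bx^t;\bu_t\,|\,\bu^{t-1}) \leq I(\bx^t;\bz_t\,|\,\bu^{t-1}) \leq H(\bz_t\,|\,\bu^{t-1}).
\end{equation*}
Combining the two inequalities yields $\mathbb{E}(l_t) \geq I(\bx^t;\bu_t\,|\,\bu^{t-1})$ for every $t$.

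Summing from $t=1$ to $T$ and recognizing Massey's directed information on the right-hand side, I would divide by $T$, take the limit supremum as $T\to\infty$, and conclude
\begin{equation*}
\limsup_{T\to\infty}\frac{1}{T}\sum_{t=1}^T \mathbb{E}(l_t) \;\geq\; \limsup_{T\to\infty}\frac{1}{T}\,I(\bx^T \rightarrow \bu^T).
\end{equation*}
The composition of the encoder kernel (\ref{eqkernel1}), the decoder kernel (\ref{eqkernel2}), and the intermediate marginalization over $\bz_1^\infty$ induces a causally conditioned kernel $\PP(u_1^\infty\|x_1^\infty)$ that satisfies the LQG constraint with bound $\gamma$; hence it is feasible for (\ref{optdirectedinfo}), and the right-hand side is at least $\mathsf{DI}(\gamma)$. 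Taking the infimum over all feasible encoder/decoder pairs on the left-hand side then gives $\mathsf{R}(\gamma)\geq \mathsf{DI}(\gamma)$.

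The only subtle step is the Markov/data-processing inequality with countably infinite alphabets and continuous $\bx^t, \bu^{t-1}$; I would justify it by factoring the joint distribution through the kernel $\PP(u_t\,|\,z^t,u^{t-1})$ and invoking the standard chain rule $I(\bx^t;\bu_t,\bz_t\,|\,\bu^{t-1})=I(\bx^t;\bz_t\,|\,\bu^{t-1})+I(\bx^t;\bu_t\,|\,\bz_t,\bu^{t-1})$ with the second term vanishing by conditional independence. Everything else is routine. The main conceptual point, already noted in \cite{silva2011}, is that prefix-freeness is exactly what allows the Kraft-based replacement of codeword length by conditional entropy without incurring a gap.
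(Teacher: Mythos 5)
Your proof has a genuine gap in the data-processing step, and the per-stage inequality you aim for is in fact false. The decoder kernel (\ref{eqkernel2}) gives the Markov chain $\bx^t \to (\bz^t,\bu^{t-1}) \to \bu_t$, where the middle node contains the \emph{entire} codeword history $\bz^t$, not just the current codeword $\bz_t$. Data processing therefore only yields $I(\bx^t;\bu_t\,|\,\bu^{t-1}) \leq I(\bx^t;\bz^t\,|\,\bu^{t-1})$, which is controlled by $H(\bz^t\,|\,\bu^{t-1})$, not by $H(\bz_t\,|\,\bu^{t-1})$. Your claimed justification --- that $I(\bx^t;\bu_t\,|\,\bz_t,\bu^{t-1})$ vanishes by conditional independence --- is incorrect: conditioned on $(\bz_t,\bu^{t-1})$ alone, $\bu_t$ still depends on $\bz^{t-1}$, which is correlated with $\bx^{t-1}$ and hence with $\bx^t$. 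A concrete counterexample to your per-stage bound $\mathbb{E}(l_t)\geq I(\bx^t;\bu_t\,|\,\bu^{t-1})$: let the decoder ignore $\bz_1$ at $t=1$ (so $\bu_1$ is deterministic), let $\bz_2$ be the zero-length codeword (so $\mathbb{E}(l_2)=0$), and let $\bu_2$ be a function of $\bz_1$; then $I(\bx^2;\bu_2\,|\,\bu_1)>0$ while $\mathbb{E}(l_2)=0$. The information "spent" at time $1$ is only revealed in $\bu_2$, so the accounting must be done on the sums, not term by term.

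This is precisely why the paper does not argue per stage. Its first (and only nontrivial) step is the \emph{feedback data processing inequality} $I(\bx^T\rightarrow\bu^T)\leq I(\bx^T\rightarrow\bz^T\,\|\,\bu^{T-1})$, a statement about the whole causally conditioned directed information, whose stage-$t$ terms are $I(\bx^t;\bz_t\,|\,\bz^{t-1},\bu^{t-1})$ --- note the extra conditioning on $\bz^{t-1}$, which is exactly what repairs the failure above. From there, $I(\bx^t;\bz_t\,|\,\bz^{t-1},\bu^{t-1})\leq H(\bz_t\,|\,\bz^{t-1},\bu^{t-1})\leq H(\bz_t)\leq\mathbb{E}(l_t)$ goes through as you intended. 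Your surrounding scaffolding (Kraft/entropy bound for prefix-free codes, Ces\`aro limit, feasibility of the induced kernel $\PP(u_1^\infty\|x_1^\infty)$ for (\ref{optdirectedinfo})) is fine, but the proof is incomplete without importing or proving the feedback data processing inequality from \cite{silva2011} or \cite{1510.04214}; the naive single-letter Markov shortcut does not substitute for it.
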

\begin{proof}The inequality is directly verified as follows.
\begin{subequations}
\begin{align}
& \;I(\bx^T\rightarrow \bu^T) \\
\leq & \;I(\bx^T\rightarrow \bz^T \| \bu^{T-1}) \label{ineqchain1}\\
=&\sum\nolimits_{t=1}^T I(\bx^t;\bz_t| \bz^{t-1},\bu^{t-1}) \label{ineqchain2} \\
=&\sum\nolimits_{t=1}^T (H(\bz_t|\bz^{t-1},\bu^{t-1})-H(\bz_t|\bx^t,\bz^{t-1},\bu^{t-1})) \\
\leq &\sum\nolimits_{t=1}^T H(\bz_t|\bz^{t-1},\bu^{t-1}) \\
\leq &\sum\nolimits_{t=1}^T H(\bz_t) \\
\leq &\sum\nolimits_{t=1}^T \mathbb{E}(l_t) \label{ineqchain3}
\end{align}
\end{subequations}
The first step (\ref{ineqchain1}) is due to the \emph{feedback data processing inequality} discussed in \cite{silva2011,1510.04214}. The definition of causally conditioned directed information \cite{kramer2003capacity} is used in step   (\ref{ineqchain2}). The final step (\ref{ineqchain3}) is due to the fact that any prefix-free code is a prefix-free code of itself, and its expected length is greater or equal to its entropy \cite[Theorem 5.3.1]{CoverThomas}.
\end{proof}
\begin{figure}[t]
    \centering
    \includegraphics[width=0.95\columnwidth]{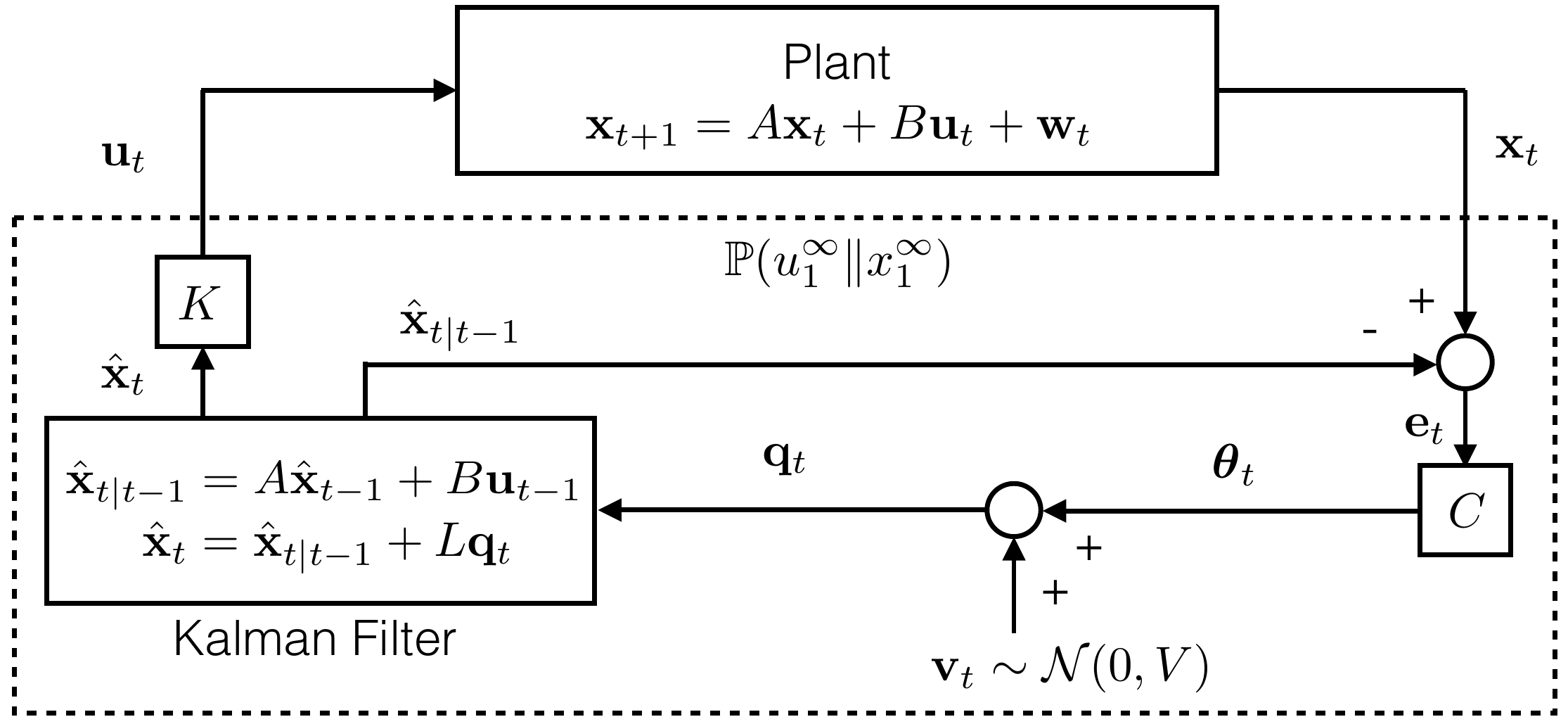}
    \vspace{-1ex} 
    \caption{A realization of an optimal solution $\PP(u_1^\infty\| x_1^\infty)$ to problem (\ref{optdirectedinfo}). Matrix $L$ is the optimal Kalman gain.}
    \label{fig:threestage}
    \vspace{-3ex}
\end{figure}
Note that there exists a nonsingular code whose expected codeword length is less that the entropy of the source \cite[Problem 5.31]{CoverThomas}.
Hence, Theorem \ref{theolower} does not hold in general if we allow a code that is not uniquely decodable. 

It can also be shown \cite{1510.04214} that, if the control system in Figure~\ref{fig:threestage} is designed according to the procedure in Proposition~\ref{propsdp}, and if $\by_t= C\bx_t+\bv_t$, then
\begin{equation}
\label{dixuxy}
I(\bx^T\rightarrow \bu^T)=\sum\nolimits_{t=1}^T I(\bx_t;\by_t|\by^{t-1}).
\end{equation}

\section{Uniform quantization with entropy coding}
\label{secq}
The rest of the paper is devoted to obtain an upper bound of the best achievable rate (\ref{optRD}).
To this end, we consider a concrete quantization/coding scheme and analyze its worst case rate. 
For the ease of analysis, we focus on a uniform scalar quantization with subtractive dither followed by entropy coding, following the ideas of \cite{silva2011,derpich2012}.
However, our coding scheme is designed based on the SDP-based solution to (\ref{optdirectedinfo}), which did not appear previously.

Let $Q_\Delta(\cdot)$ be a scalar quantizer defined by
\[
Q_\Delta(\bx)=i\Delta \;\; \text{ for } i\Delta -\tfrac{\Delta}{2}\leq \bx < i\Delta +\tfrac{\Delta}{2}.
\]
A scalar quantizer with subtractive dither $Q_\Delta^{\text{S.D.}}(\cdot)$ is defined by $Q_\Delta^{\text{S.D.}}(\bx)=Q_\Delta(\bx+\boldxi)-\boldxi$, where $\boldxi$ is a random variable uniformly distributed over $[-\Delta/2, \Delta/2]$.
\begin{remark}
$Q_\Delta^{\text{S.D.}}(\cdot)$ has some convenient mathematical properties (presented in Lemma~\ref{prop2} below) that will simplify the rate analysis. However, note that implementation of $Q_\Delta^{\text{S.D.}}(\cdot)$ requires a shared randomness $\boldxi$ both at the encoder's and the decoder's ends. 
In practice, two synchronized pseudorandom number generators can be used at the both ends.
\end{remark}

\subsection{Predictive quantizer}
The knowledge of the structure of an optimal solution $\PP(u_1^\infty\|x_1^\infty)$ to (\ref{optdirectedinfo}) can be used for an efficient quantizer/encoder design.
Intuitively, we design a coding scheme in such a way that the encoder and decoder policies jointly define a stochastic kernel that is similar to $\PP(u_1^\infty\|x_1^\infty)$.
This can be done, as specified below, by selecting the quantizer step size $\Delta$ so that the covariance of the resulting quantization error matches $V$. 
However, the encoder shall not quantize the observed state $\bx_t$ directly. 
In order to minimize the entropy of the quantizer input while keeping the contained information statistically equivalent, it is more advantageous to quantize the deviation of $\bx_t$ from the linear least mean square estimate $\hat{\bx}_{t|t-1}$ of $\bx_t$ given $(\by^{t-1}, \bu^{t-1})$. This technique is related to the
\emph{innovations approach} \cite{kailath1968innovations}, which is also used in \cite{borkar1997lqg}.\footnote{It is known that a predictive quantizer can be used without loss of performance in the joint control/quantizer design for LQG systems \cite{bao2011iterative,yuksel2014jointly}.}

In particular, consider a feedback control system illustrated in Figure~\ref{fig:block}.
Matrices $C$, $K$ and $L$ are chosen to be the same as in Figure~\ref{fig:threestage}.
Based on the output $\hat{\bx}_{t|t-1}$ of the Kalman filter block, consider a (scaled) estimation error
\[
\boldtheta_t=C(\bx_t-\hat{\bx}_{t|t-1}).
\]
Note that this is an $\mathbb{R}^r$-valued random process.
Let $V_i>0, i=1,\cdots, r$ be the $i$-th diagonal entry of $V$, and choose $\Delta_i>0, i=1,\cdots, r$ such that $\tfrac{\Delta_i^2}{12}=V_i$.
We apply the uniform quantizers with step sizes $\Delta_i$ with subtractive dither separately to each component of $\boldtheta_t$, i.e.,
\[
\bq_{t,i}=Q_{\Delta_i}^{\text{S.D.}}(\boldtheta_{t,i}), i=1,\cdots, r
\]
and define an $\mathbb{R}^r$-valued process $\bq_t=(\bq_{t,1},\cdots, \bq_{t,r})$.
More precisely, letting $\boldxi_t=(\boldxi_{t,1},\cdots, \boldxi_{t,r})$ be an $\mathbb{R}^r$-valued dither signal whose components are mutually independent and $\boldxi_{t,i}\sim\mathcal{U}[-\tfrac{\Delta_i}{2},\tfrac{\Delta_i}{2}]$, the quantizer output is given by
\[
\tilde{\bq}_{t,i}=Q_{\Delta_i}(\boldtheta_{t,i}+\boldxi_{t,i}), i=1,\cdots, r.
\]
Notice that $\tilde{\bq}_t=(\tilde{\bq}_{t,1},\cdots,\tilde{\bq}_{t,r})$ takes countably infinite possible values.
At every time step $t$, we apply an entropy coding scheme (described below) to $\tilde{\bq}_t$ to generate a codeword $\bz_t$.
The decoder then reproduces $\tilde{\bq}_t$ and $\bq_t$ by subtracting the dither signal.

Notice that the Kalman filter in Figure~\ref{fig:block} may not be the least mean square estimator anymore since signals are no longer Gaussian because of the dithered quantizers. 

\begin{figure}[t]
    \centering
    \includegraphics[width=\columnwidth]{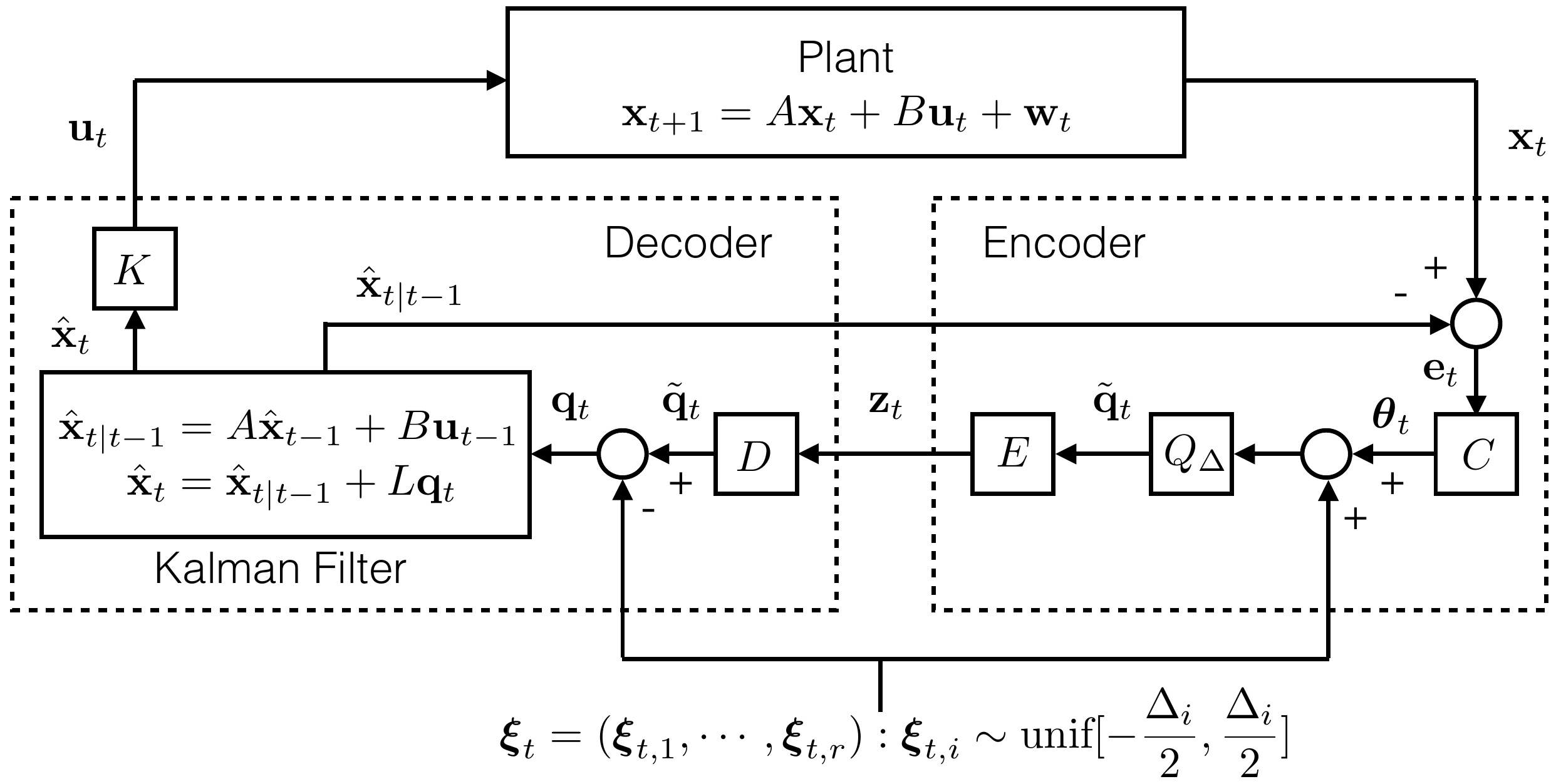} 
    \vspace{-4ex} 
    \caption{Proposed feedback control architecture. For simplicity, a feedback link from the decoder to encoder that transmits $\hat{\bx}_{t|t-1}$ is shown, but this link is unnecessary in practice, since the value of $\hat{\bx}_{t|t-1}$ can be completely estimated by the encoder alone. }
    \label{fig:block}
    \vspace{-3ex}
\end{figure}

\subsection{Entropy coding}
\label{secentropycoding}
At every time step $t$, we assume that a message $\tilde{\bq}_t$ is mapped to a codeword $\bz_t\in\{0,1\}^{l_t}$ designed by Shannon-Fano coding.
\begin{theorem}
\label{Theoshannoncode}(Shannon-Fano code, \cite[Problem 5.28]{CoverThomas})
Let $\bx$ be a random variable that takes countably infinite values $\{1,2,\cdots\}$ with probabilities $p_1, p_2, \cdots$. Assume $p_i>0$ and $p_i\geq p_{i+1}$ for all $i=1,2,\cdots$. Define $F_i=\sum_{k=1}^{i-1} p_k$, and let a codeword $W_i$ for $i$ to be the binary expression of $F_i$ rounded off to $l_i$ digits, where $l_i=\lceil \log\frac{1}{p_i} \rceil$. Then the constructed code is prefix-free and the average length $\mathbb{E}(l)$ satisfies $H(\bx)\leq \mathbb{E}(l) \leq H(\bx)+1$. 
\end{theorem}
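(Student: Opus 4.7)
The plan is to verify three claims separately: (i) the resulting code is prefix-free; (ii) $\mathbb{E}(l)\leq H(\bx)+1$; and (iii) $\mathbb{E}(l)\geq H(\bx)$. Claim (ii) reduces to a one-line computation using $l_i=\lceil \log(1/p_i)\rceil<\log(1/p_i)+1$, which yields
\[
\mathbb{E}(l)=\sum\nolimits_i p_i l_i<\sum\nolimits_i p_i(\log(1/p_i)+1)=H(\bx)+1.
\]
Claim (iii) is Shannon's source coding bound for prefix-free codes: once (i) is in hand, Kraft's inequality $\sum_i 2^{-l_i}\leq 1$ (which remains valid for countable codebooks) combined with the nonnegativity of $D(p\|q)$ for $q_i\triangleq 2^{-l_i}/\sum_j 2^{-l_j}$ delivers $\mathbb{E}(l)\geq H(\bx)$.

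The substantive step is (i). First observe that $p_i\geq p_{i+1}$ implies $l_i\leq l_{i+1}$, so the lengths are nondecreasing in $i$, and it suffices to rule out $W_i$ being a prefix of $W_j$ for every $i<j$. Fix a convention (say, the non-terminating one) for the binary expansion of dyadic rationals, write $F_i=0.a_1a_2\cdots$, and introduce the truncation $\lfloor F\rfloor_l\triangleq \sum_{k=1}^{l} a_k 2^{-k}$, which satisfies $0\leq F-\lfloor F\rfloor_l<2^{-l}$. If $W_i$ were a prefix of $W_j$, then the first $l_i$ bits of $F_j$'s expansion would coincide with $W_i$, hence $\lfloor F_j\rfloor_{l_i}=\lfloor F_i\rfloor_{l_i}$, forcing $|F_j-F_i|<2^{-l_i}$. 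But by construction $F_j-F_i=\sum_{k=i}^{j-1}p_k\geq p_i$, and the choice $l_i=\lceil\log(1/p_i)\rceil$ yields $2^{-l_i}\leq p_i$, so $F_j-F_i\geq 2^{-l_i}$, contradicting the previous strict inequality.

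I expect the main obstacle to be the careful handling of binary expansions for dyadic rationals, where the expansion is non-unique; fixing a consistent convention (e.g., always taking the non-terminating form) is essential so that the strict bound $F-\lfloor F\rfloor_l<2^{-l}$ actually holds and the contradiction in (i) goes through cleanly. A secondary technicality is the extension of Kraft's inequality to the countably infinite setting used in (iii), but this is handled by a standard limiting argument on finite truncations of the codebook.
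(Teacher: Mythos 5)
The paper does not actually supply a proof of this theorem: it is stated with a citation to \cite[Problem 5.28]{CoverThomas} and used as a black box, so there is no ``paper's own proof'' to compare against. Your argument is the standard textbook proof of the Shannon code, and it is essentially correct: parts (ii) and (iii) are the usual one-line Kraft-and-$D(p\|q)$ computations, and part (i) is the right idea --- use monotone lengths, show that a prefix relation $W_i \preceq W_j$ would force $|F_j-F_i|<2^{-l_i}$, and contradict this with $F_j-F_i\geq p_i\geq 2^{-l_i}$.

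One small but genuine slip worth flagging: you say to fix the \emph{non-terminating} binary expansion and then assert $0\leq F-\lfloor F\rfloor_l<2^{-l}$. These two choices are inconsistent. Under the non-terminating convention (e.g.\ $1/2=0.0111\cdots$) the truncation satisfies $0< F-\lfloor F\rfloor_l\leq 2^{-l}$, with equality possible at dyadic $F$; the bound $0\leq F-\lfloor F\rfloor_l<2^{-l}$ is the one you get from the \emph{terminating}/floor convention $\lfloor F\rfloor_l=2^{-l}\lfloor 2^lF\rfloor$, which is also what Cover--Thomas uses (and what ``rounded off to $l_i$ digits'' naturally means). Fortunately your contradiction survives either way: with $a=F_j-\lfloor F_j\rfloor_{l_i}$ and $b=F_i-\lfloor F_i\rfloor_{l_i}$ both in $[0,2^{-l_i})$ or both in $(0,2^{-l_i}]$, one still gets $|a-b|<2^{-l_i}$ strictly, so $F_j-F_i\geq 2^{-l_i}$ is indeed impossible. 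So this is a wording/convention fix rather than a gap in the logic. (Also note $F_1=0$ has no non-terminating expansion at all, another reason to prefer the floor convention.)
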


Theorem~\ref{Theoshannoncode} implies that for each $t$ there exists a code whose expected codeword length satisfies $H(\tilde{\bq}_t|\boldxi_t) \leq \mathbb{E}(l_t) \leq H(\tilde{\bq}_t|\boldxi_t)+1$. Here, we only consider time-invariant codebooks whose codewords are adapted to the stationary conditional distribution of $\tilde{\bq}_t$ given $\boldxi_t$.
\begin{remark}
However, for a fixed code, obtaining the true stationary distribution of $\tilde{\bq}_t$ given $\boldxi_t$ may not be a trivial process. In this context, one may need an online algorithm to estimate the true conditional distribution, or a variation of the universal coding that adapts its codewords.
\end{remark}

\section{Analysis}
\label{secanalysis}
In this section, we analyze the LQG control performance and the worst case rate of the encoding/control law designed in the previous section.
We will make use of the following technical lemma, which is a straightforward extension of the main results of \cite{zamir1992universal}. 
\ifdefined\LONGVERSION Proofs are provided in Appendix. \fi
\ifdefined\SHORTVERSION Proofs are provided in \cite{extendedversion}. \fi
\begin{lemma}
\label{prop2}
Let $\bx$ be an $\mathbb{R}^r$-valued random variable such that $h(\bx)$ is finite, and $\Delta=(\Delta_1,\cdots,\Delta_r)$ be a tuple of positive quantizer step sizes.
Let $\boldxi$ be the $\mathbb{R}^r$-valued dither independent of $\bx$ such that $\boldxi_i\sim\mathcal{U}[-\tfrac{\Delta_i}{2},\tfrac{\Delta_i}{2}]$, and suppose entries of $\boldxi$ are mutually independent.
Let $\bq$ and $\tilde{\bq}$ be $\mathbb{R}^r$-valued random variables whose $i$-th entries are defined by
\begin{align*}
\bq_i&=Q_{\Delta_i}(\bx_i+\boldxi_i)-\boldxi_i \left(=Q_{\Delta_i}^{\text{S.D.}}(\bx_i)\right) \\
\tilde{\bq}_i&=Q_{\Delta_i}(\bx_i+\boldxi_i).
\end{align*}
Then, the following statements hold.
\begin{itemize}[leftmargin=*]
\item[(a)] The $i$-th entry of the quantization error $\boldeta_i=\bq_i-\bx_i$ is independent of $\bx$ and is uniformly distributed on $[-\tfrac{\Delta_i}{2},\tfrac{\Delta_i}{2}]$. Moreover, entries of $\boldeta$ are mutually independent.
\item[(b)] Let $\bn$ be an $\mathbb{R}^r$-valued random variable. Suppose that $\bn$ is independent of $\bx$, entries of $\bn$ are mutually independent, and $\bn_i\sim\mathcal{U}[-\tfrac{\Delta_i}{2},\tfrac{\Delta_i}{2}]$ for $i=1,\cdots, r$. 
If $\by=\bx+\bn$, then
\[
H(\tilde{\bq}|\boldxi)=H(\bq|\boldxi)=h(\by)-\sum\nolimits_{i=1}^r \log \Delta_i=I(\bx;\by).
\]
\item[(c)] Let 
$
\textsf{RDF}_\bx(D)=\inf\nolimits_{\PP(u|x):\mathbb{E}\|\bx-\bu\|^2\leq D} I(\bx;\bu)
$
be the rate-distortion function of the source $\bx$, and
$
\textsf{C}_{\Delta}(D)=\sup\nolimits_{\PP(x):\mathbb{E}\|\bx\|^2\leq D} I(\bx;\by)
$
be the capacity of a channel shown in Figure~\ref{fig:channel}, where $D=\sum_{i=1}^r \tfrac{\Delta_i^2}{12}$. 
Then, we have 
\[
H(\tilde{\bq}|\boldxi) - \textsf{RDF}_\bx(D) \leq \textsf{C}_{\Delta}(D).
\]
\item[(d)] For every positive $\Delta_1, \cdots, \Delta_r$ and $D$ such that $D=\sum_{i=1}^r \tfrac{\Delta_i^2}{12}$, the capacity $\textsf{C}_\Delta(D)$ defined above satisfies
\[\mathsf{C}_\Delta(D)<\frac{r}{2}\log \frac{4\pi e}{12}.\]
\end{itemize}
\end{lemma}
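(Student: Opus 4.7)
The plan is to prove the four parts in order, since each leans on its predecessor.

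For (a), I would invoke the standard Schuchman-style argument for subtractively dithered uniform quantizers, applied coordinate by coordinate. Fix any value $\bx_i = x$: then $x + \boldxi_i$ is uniform on an interval of length $\Delta_i$, so its offset $Q_{\Delta_i}(x+\boldxi_i) - (x+\boldxi_i)$ is uniform on $[-\Delta_i/2,\Delta_i/2]$ regardless of $x$. Rearranging gives $\boldeta_i = Q_{\Delta_i}(\bx_i+\boldxi_i) - \boldxi_i - \bx_i \sim \mathcal{U}[-\Delta_i/2,\Delta_i/2]$ independently of $\bx_i$. Cross-coordinate independence of the $\boldeta_i$ follows because $Q_{\Delta}$ acts separately in each coordinate and the $\boldxi_i$ are mutually independent.

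For (b), observe first that $H(\tilde{\bq}|\boldxi)=H(\bq|\boldxi)$ because conditioning on $\boldxi$ converts the translation $\tilde{\bq} = \bq + \boldxi$ into a one-to-one map. To identify this with $h(\by)-\sum\log\Delta_i$, I would compute $\Pr(\tilde{\bq}=\mathbf{k}\Delta \mid \boldxi=\xi)$ as the integral of $f_{\bx}$ over the cell centered at $\mathbf{k}\Delta-\xi$, then recognize this cell-integral as $\prod_i\Delta_i \cdot f_{\by}(\mathbf{k}\Delta-\xi)$, where $\by = \bx + \bn$ (since $\bn$ and $\boldxi$ share the same distribution). Substituting into $-\sum_{\mathbf{k}} p\log p$ and averaging over $\xi$ uniformly on $\prod[-\Delta_i/2,\Delta_i/2]$, a change of variable collapses the Riemann sum into an integral of $f_{\by}\log f_{\by}$, giving $H(\tilde{\bq}|\boldxi)=h(\by)-\sum\log\Delta_i$. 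Finally, $I(\bx;\by) = h(\by)-h(\by|\bx)=h(\by)-h(\bn)=h(\by)-\sum\log\Delta_i$ closes the chain.

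For (c), I would use an auxiliary test channel. Let $\bu^{*}$ attain the rate-distortion function, so $\mathbb{E}\|\bx-\bu^{*}\|^{2}\le D$ and $I(\bx;\bu^{*})=\textsf{RDF}_{\bx}(D)$, and take $\bn$ independent of $(\bx,\bu^{*})$. The chain rule gives $I(\bx;\by)\le I(\bx;\by,\bu^{*}) = I(\bx;\bu^{*})+I(\bx;\by|\bu^{*})$. The conditional term simplifies to $h(\by|\bu^{*})-h(\bn)$, and since $h(\by|\bu^{*})=h(\by-\bu^{*}|\bu^{*})\le h(\by-\bu^{*})$, we identify $\bx' := \bx-\bu^{*}$ as a valid input with $\mathbb{E}\|\bx'\|^{2}\le D$ for the channel $\by-\bu^{*}=\bx'+\bn$. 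The definition of $\textsf{C}_{\Delta}(D)$ then bounds $h(\bx'+\bn)-h(\bn)\le \textsf{C}_{\Delta}(D)$, yielding $H(\tilde{\bq}|\boldxi)=I(\bx;\by)\le \textsf{RDF}_{\bx}(D)+\textsf{C}_{\Delta}(D)$.

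For (d), the natural route is the Gaussian maximum-entropy principle together with an AM--GM step. Since $\mathbb{E}\|\by\|^{2}\le \mathbb{E}\|\bx\|^{2}+\mathbb{E}\|\bn\|^{2}\le 2D$, the trace-constrained Gaussian bound yields $h(\by)\le \tfrac{r}{2}\log(4\pi e D/r)$; combining with $h(\bn)=\sum\log\Delta_i$ gives $I(\bx;\by)\le \tfrac{r}{2}\log(4\pi e D/r)-\sum\log\Delta_i$. The main obstacle is the last step: one must establish $\sum\log\Delta_i \ge \tfrac{r}{2}\log(12D/r)$ to collapse the bound to $\tfrac{r}{2}\log\tfrac{4\pi e}{12}$. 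The balanced case $\Delta_i$ equal is immediate, and I expect the argument to exploit either AM--GM in the correct direction via the identity $\sum\Delta_i^{2}=12D$, or a coordinate-wise Gaussian bound $h(\by)\le \sum \tfrac{1}{2}\log 2\pi e(\sigma_{ii}+\Delta_i^{2}/12)$ combined with the per-coordinate concavity of $\log(1+t)$ under the budget $\sum\sigma_{ii}\le \sum\Delta_i^{2}/12$. Strictness of the inequality $<$ is inherited from the strictness of the Gaussian bound: $\by$ is a convolution with a compactly supported uniform $\bn$ and therefore cannot be Gaussian, so the maximum-entropy inequality is strict for every admissible input.
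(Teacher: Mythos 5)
Parts (a)--(c) of your proposal are correct and essentially coincide with the paper's own proofs. Your (a) is the standard subtractive-dither argument (the paper just writes out the piecewise map from $\boldxi_i$ to $\boldeta_i$ explicitly); your (b) is the same cell-integral identification $P(\tilde{\bq}=\tilde q^k\,|\,\boldxi=\xi)=(\prod_i\Delta_i)f_\by(\tilde q^k-\xi)$ followed by the same Riemann-sum collapse; and your chain-rule decomposition $I(\bx;\by)\le I(\bx;\bu^*)+I(\bx;\by|\bu^*)$ in (c) is algebraically identical to the paper's $I(\bx;\by)-I(\bx;\bu)=I(\bx;\by|\bu)-I(\bx;\bu|\by)\le I(\bx;\by|\bu)$. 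The only omission is the case where the RDF infimum is not attained, which the paper handles by passing to an asymptotically optimal sequence of test channels.

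The genuine gap is in (d), and you have located it precisely: the inequality you need, $\sum_i\log\Delta_i\ge\tfrac{r}{2}\log(12D/r)$, asserts that the geometric mean of the $\Delta_i^2$ dominates their arithmetic mean, which is the reverse of AM--GM and holds only when all $\Delta_i$ are equal. Neither of your proposed repairs closes it. Your second route (coordinate-wise Gaussian bound plus a power allocation over $\sum_i p_i=D$) is exactly what the paper does, but the paper's final step --- that $\max\{\sum_i\tfrac12\log(1+p_i/N_i):p_i>0,\ \sum_i p_i=\sum_i N_i\}$ equals $\tfrac r2\log 2$, with $N_i=\Delta_i^2/12$ --- is itself false for unequal $N_i$: water-filling concentrates power on low-noise coordinates, e.g.\ for $N=(1,100)$ the maximum is $\tfrac12\log 101+\tfrac12\log 1.01\approx 3.34$ bits, not $\log 2$. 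Indeed the statement of (d) fails as written: with $r=2$, $D$ fixed and $\Delta_1\to0$, a Gaussian input placing all power $D$ on the first coordinate gives $I(\bx;\by)\ge\tfrac12\log\tfrac{2\pi eD}{\Delta_1^2}\to\infty$, exceeding the claimed bound $\log\tfrac{4\pi e}{12}$. So the bound is provable (by either your argument or the paper's, with your strictness observation via the non-Gaussianity of $\by$) only in the balanced case $\Delta_1=\cdots=\Delta_r$; in general the constant must be weakened to $\tfrac r2\log(4\pi e G)$ with $G=\frac{\sum_i\Delta_i^2/12}{r(\prod_i\Delta_i)^{2/r}}\ge\tfrac1{12}$ (the normalized second moment of the box $\prod_i[-\Delta_i/2,\Delta_i/2]$), or else the distortion and power constraints must be imposed per coordinate rather than in trace. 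Do not keep searching for the missing AM--GM step; the obstacle you flagged is real.
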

\begin{figure}[t]
    \centering
    \includegraphics[width=0.5\columnwidth]{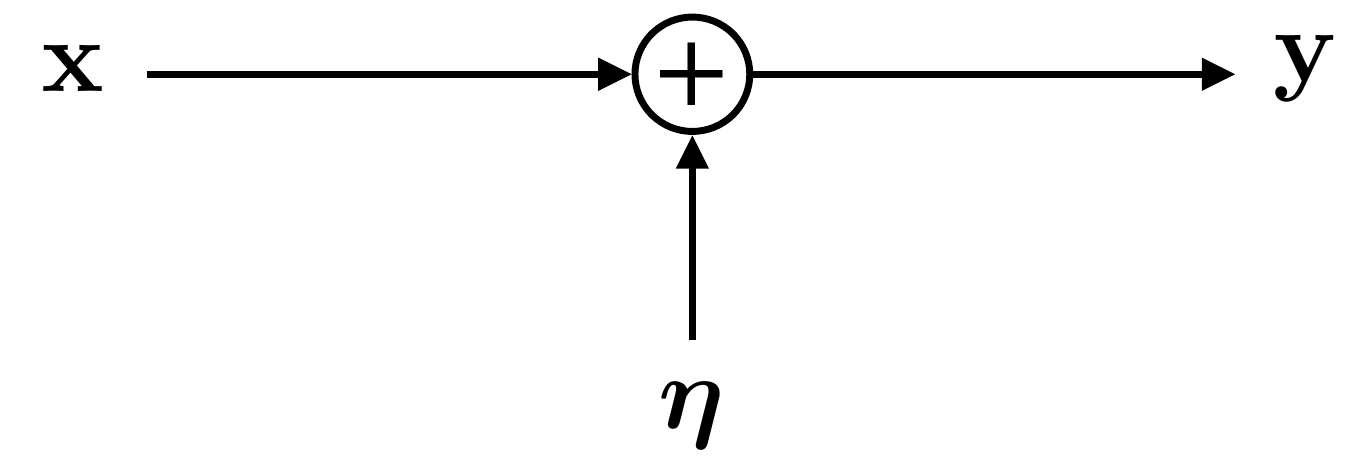} 
    \vspace{-1ex} 
    \caption{Auxiliary $r$-dimensional additive noise channel with input $\bx$ and output $\by=\bx+\boldeta$. Additive noise $\boldeta$ is independent of $\bx$, and $\boldeta_i\sim\mathcal{U}[-\tfrac{\Delta_i}{2},\tfrac{\Delta_i}{2}]$ for $i=1,\cdots,r$. Channel is subject to an input power constraint $\mathbb{E}\|\bx\|^2 \leq D =\sum_{i=1}^r \Delta_i^2/12$.}
    \label{fig:channel}
    \vspace{-3ex}
\end{figure}

\subsection{Control performance}
An implication of Lemma~\ref{prop2}(a) is that the uniform quantizer with subtractive dither can be equivalently modeled as an additive uniform noise channel.
Thus, Figure~\ref{fig:block} can be written as Figure~\ref{fig:equivsystem}, and in both figures, $\bx_t$ and $\bu_t$ have the same stationary joint distributions. 
Also, notice that the only difference between Figure~\ref{fig:threestage} and Figure~\ref{fig:equivsystem} is the noise statistics of $\bv_t$ and $\boldeta_t$.
In order to distinguish these two cases, denote by $(\bx_t^{\text{G}},\bu_t^{\text{G}})$ the jointly Gaussian random variables having the stationary joint distribution of $(\bx_t,\bu_t)$ in Figure~\ref{fig:threestage}, and by $(\bx_t^{\text{NG}},\bu_t^{\text{NG}})$ the non-Gaussian random variables having the stationary joint distribution of $(\bx_t,\bu_t)$ in Figure~\ref{fig:block} and \ref{fig:equivsystem}.
\begin{lemma}
\label{lem:samecov}
The joint distributions of $(\bx_t^{\text{G}},\bu_t^{\text{G}})$ and
$(\bx_t^{\text{NG}},\bu_t^{\text{NG}})$ have the same mean and covariance.
\end{lemma}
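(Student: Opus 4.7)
The plan is to exploit the fact that the two systems in Figures 3 and 4/5 share the same linear structure and differ only in the distribution—not the first two moments—of the channel noise. All blocks in both figures (plant dynamics \eqref{eqsystem}, the measurement map $\by_t = C\bx_t + \text{(noise)}_t$, the Kalman-type filter with gain $L$, and the certainty-equivalence controller $\bu_t = K\hat{\bx}_{t|t-1}$) act as fixed linear maps on their inputs; the non-Gaussianity in Figure~\ref{fig:block} only changes the optimality interpretation of the filter, not its linear form.

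First I would unroll the recursions to express $(\bx_t,\bu_t)$ explicitly as a linear function of the exogenous inputs, namely the initial state $\bx_1$, the process noise sequence $\bw_1^{t-1}$, and the measurement-channel noise sequence $\bv_1^t$ (in the Gaussian case) or $\boldeta_1^t$ (in the non-Gaussian case). Because the plant, filter, and controller are identical across the two figures, the coefficient matrices in these linear expressions are the \emph{same}. Thus $(\bx_t^{\text{G}},\bu_t^{\text{G}}) = \mathcal{L}_t(\bx_1,\bw_1^{t-1},\bv_1^t)$ and $(\bx_t^{\text{NG}},\bu_t^{\text{NG}}) = \mathcal{L}_t(\bx_1,\bw_1^{t-1},\boldeta_1^t)$ for the \emph{same} linear map $\mathcal{L}_t$.

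Next I would compare the input statistics. The initial state $\bx_1\sim\mathcal{N}(0,P_{1|0})$ and the process noise $\bw_t\sim\mathcal{N}(0,W)$ are common to both systems. The Gaussian channel noise $\bv_t$ has mean zero and covariance $V=\mathrm{diag}(V_1,\dots,V_r)$. By Lemma~\ref{prop2}(a), the dithered-quantization error $\boldeta_t$ has independent entries with $\boldeta_{t,i}$ uniform on $[-\Delta_i/2,\Delta_i/2]$, so it has mean zero and covariance $\mathrm{diag}(\Delta_1^2/12,\dots,\Delta_r^2/12)=V$ by our choice $\Delta_i^2/12=V_i$. Moreover, in both cases the three input blocks ($\bx_1$, $\{\bw_s\}$, and the channel noise $\{\bv_s\}$ or $\{\boldeta_s\}$) are mutually independent and white in time, so their joint covariance structures match.

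Finally, since mean and covariance of the output of a fixed linear map depend only on the mean and covariance of the input, applying $\mathcal{L}_t$ to two input tuples with identical first and second moments yields outputs with identical first and second moments. This gives the claim for each finite $t$; taking $t\to\infty$ and using that the closed-loop matrices are stable (so the Lyapunov recursion for the covariance of $(\bx_t,\bu_t)$ converges to a unique fixed point, identical in both cases) gives the stationary version. The only subtlety worth a sentence of care is the remark the authors already flag—that in Figure~\ref{fig:block} the ``Kalman filter'' is no longer the MMSE estimator—so one should be explicit that we are using the same linear filter $L$ as in Figure~\ref{fig:threestage} and not re-optimizing it for the non-Gaussian case; this is where a careless reader might expect a mismatch, but by construction there is none.
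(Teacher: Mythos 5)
Your proposal is correct and follows essentially the same route as the paper's proof: the paper also observes that $\boldeta_t$ is chosen to have covariance $V$, that all blocks are linear, and concludes by induction in $t$ that the first and second moments coincide. Your version merely spells out the induction as an explicit unrolling of the linear recursion and adds the (valid) remark about convergence to the stationary moments, so there is nothing substantively different to flag.
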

\begin{proof}
Notice that we have chosen the dither step sizes in such as way that $\boldeta_t$ has a covariance matrix $V$. Since all operations in Figures~\ref{fig:threestage} and \ref{fig:equivsystem} are linear, it can be shown by induction (in $t$) that $\bx_t$ and $\bu_t$ in Figures~\ref{fig:threestage} and \ref{fig:equivsystem} have the same first and second moments for all $t=1,2,\cdots$.
\end{proof}

Since the LQG control performance depends only on the second moments of the stationary distribution, it follows from Lemma~\ref{lem:samecov} that the control system in Figure~\ref{fig:block} attains the same control performance as Figure~\ref{fig:threestage} which is $\gamma$.

\begin{figure}[t]
    \centering
    \includegraphics[width=0.95\columnwidth]{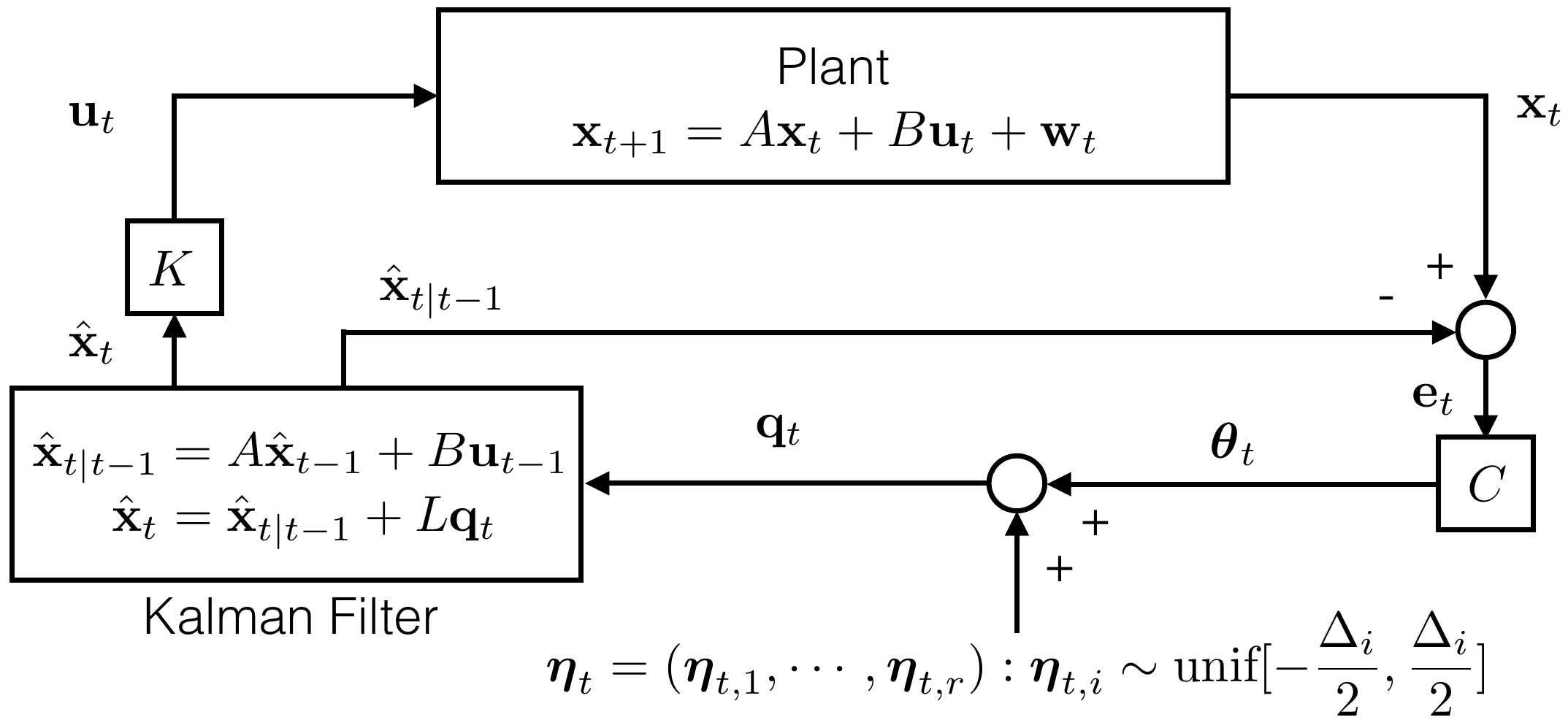} 
    \vspace{-2ex} 
    \caption{Equivalent system.}
    \label{fig:equivsystem}
    \vspace{-3ex}
\end{figure}

\subsection{Rate analysis}
We are now ready to prove the main result of this paper.
\begin{theorem}
\label{theoupper}
For every $\gamma>0$, we have 
\[
\mathsf{R}(\gamma) < \mathsf{DI}(\gamma)+\frac{r}{2}\log\frac{4\pi e}{12}+1\approx \mathsf{DI}(\gamma)+0.754r+1
\]
where $r=\text{rank}(\mathsf{SNR}(\gamma))$.
\end{theorem}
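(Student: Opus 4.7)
The plan is to combine Shannon--Fano's per-symbol bound with the three parts of Lemma~\ref{prop2}, and then convert the resulting rate--distortion expression back to the directed information $\mathsf{DI}(\gamma)$ via the Gaussian comparison supplied by Lemma~\ref{lem:samecov} together with identity~(\ref{dixuxy}).

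First, I would apply Theorem~\ref{Theoshannoncode} stage by stage: for each $t$ the Shannon--Fano codeword satisfies $\mathbb{E}(l_t) \leq H(\tilde{\bq}_t \mid \boldxi_t) + 1$. Applying Lemma~\ref{prop2}(b) with $\bx = \boldtheta_t$ and an auxiliary independent uniform noise $\bn_t$ whose coordinates are $\mathcal{U}[-\Delta_i/2,\Delta_i/2]$ gives $H(\tilde{\bq}_t \mid \boldxi_t) = I(\boldtheta_t^{\text{NG}}; \boldtheta_t^{\text{NG}} + \bn_t)$, where the superscript reminds us that, in the real (quantized) system, $\boldtheta_t$ follows the non-Gaussian law of Figure~\ref{fig:block}. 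Using parts (c) and (d) of the same lemma with $D = \sum_{i=1}^r \Delta_i^2/12 = \mathrm{tr}(V)$ then yields
\[
H(\tilde{\bq}_t \mid \boldxi_t) \;\leq\; \mathsf{RDF}_{\boldtheta_t^{\text{NG}}}(D) + \mathsf{C}_\Delta(D) \;<\; \mathsf{RDF}_{\boldtheta_t^{\text{NG}}}(D) + \tfrac{r}{2}\log\tfrac{4\pi e}{12}.
\]

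Second, I would dominate $\mathsf{RDF}_{\boldtheta_t^{\text{NG}}}(D)$ by the corresponding quantity in the Gaussian system of Figure~\ref{fig:threestage}. Since the Kalman filter is linear, $\boldtheta_t^{\text{NG}}$ and $\boldtheta_t^{\text{G}}$ are both linear functionals of past signals whose second-order statistics coincide by Lemma~\ref{lem:samecov}, so they have the same covariance. Because Gaussianity maximizes the rate--distortion function at fixed covariance for MSE distortion, $\mathsf{RDF}_{\boldtheta_t^{\text{NG}}}(D) \leq \mathsf{RDF}_{\boldtheta_t^{\text{G}}}(D)$. Choosing the specific Gaussian test channel $\bu = \boldtheta_t^{\text{G}} + \bv_t$ with $\bv_t \sim \mathcal{N}(0,V)$ yields distortion $\mathrm{tr}(V) = D$ and hence $\mathsf{RDF}_{\boldtheta_t^{\text{G}}}(D) \leq I(\boldtheta_t^{\text{G}}; \boldtheta_t^{\text{G}} + \bv_t)$. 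A short innovations argument in the Gaussian system (subtract $C\hat{\bx}_{t\mid t-1}^{\text{G}}$ from $\by_t^{\text{G}}$ and use that the innovation is independent of $\by^{G,t-1}$) then gives $I(\boldtheta_t^{\text{G}}; \boldtheta_t^{\text{G}} + \bv_t) = I(\bx_t^{\text{G}}; \by_t^{\text{G}} \mid \by^{G,t-1})$.

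Finally, I would sum over $t$ and invoke identity~(\ref{dixuxy}): $\sum_{t=1}^T I(\bx_t^{\text{G}}; \by_t^{\text{G}} \mid \by^{G,t-1}) = I(\bx^{G,T} \to \bu^{G,T})$. Dividing by $T$ and taking $\limsup_{T\to\infty}$ produces
\[
\mathsf{R}(\gamma) \;\leq\; \limsup_{T\to\infty} \frac{1}{T}\sum_{t=1}^T \mathbb{E}(l_t) \;<\; \mathsf{DI}(\gamma) + \tfrac{r}{2}\log\tfrac{4\pi e}{12} + 1,
\]
with the required LQG constraint met by the argument following Lemma~\ref{lem:samecov}. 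The main obstacle is the rigorous Gaussian-versus-non-Gaussian comparison at step two: one must verify that $\boldtheta_t^{\text{NG}}$ and $\boldtheta_t^{\text{G}}$ really do share the same second moments (which requires tracking that Lemma~\ref{lem:samecov} extends to the predicted state $\hat{\bx}_{t\mid t-1}$) and then appeal to the standard fact that Gaussian sources maximize the quadratic-distortion rate--distortion function. A minor but needed subtlety is that the Shannon--Fano code is built from the \emph{stationary} conditional law of $\tilde{\bq}_t$ given $\boldxi_t$, which is justified because the stabilizability/detectability hypothesis drives the closed loop of Figure~\ref{fig:block} to a stationary regime, so that the $\limsup$ coincides with the stationary expectation.
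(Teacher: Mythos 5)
Your proposal is correct and follows essentially the same route as the paper's proof: Shannon--Fano gives $\mathbb{E}(l_t)\leq H(\tilde{\bq}_t|\boldxi_t)+1$, Lemma~\ref{prop2}(c),(d) bound this by $\mathsf{RDF}_{\boldtheta_t^{\text{NG}}}(D)+\tfrac{r}{2}\log\tfrac{4\pi e}{12}$, the Gaussian-maximizes-RDF fact and the test channel $\boldtheta_t^{\text{G}}+\bv_t$ transfer this to the Gaussian system, and the innovations identity together with (\ref{dixuxy}) recovers the directed information. The only cosmetic difference is that you assert equality $I(\boldtheta_t^{\text{G}};\boldtheta_t^{\text{G}}+\bv_t)=I(\bx_t^{\text{G}};\by_t^{\text{G}}|\by^{G,t-1})$ where the paper only needs (and proves) the one-sided data-processing inequality $I(\be_t;C\be_t+\bv_t)\geq I(\boldtheta_t;\boldtheta_t+\bv_t)$; your equality is in fact valid since $\bv_t$ is independent of $\be_t$, so this is not a gap.
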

\begin{proof}
Set $D=\sum_{i=1}^r \frac{\Delta_i^2}{12}=\text{Tr} (V)$. 
If $\boldtheta_t^{\text{G}}$ is a random variable with the stationary distribution of $\boldtheta_t$ in Figure~\ref{fig:threestage} and $\boldtheta_t^{\text{NG}}$ is a random variable with the stationary distribution of $\boldtheta_t$ in Figure~\ref{fig:equivsystem}, 
we have
\begin{subequations}
\label{finproof}
\begin{align}
\mathbb{E}(l_t) &\leq H(\tilde{\bq}_t|\boldxi_t)+1 \label{finproof1} \\
&< \mathsf{RDF}_{\boldtheta_t^{\text{NG}}}(D)+\frac{r}{2}\log\frac{4\pi e}{12}+1 \label{finproof3} \\
& \leq \mathsf{RDF}_{\boldtheta_t^{\text{G}}}(D)+\frac{r}{2}\log\frac{4\pi e}{12}+1. \label{finproof4}
\end{align}
\end{subequations}
Step (\ref{finproof1}) follows from the discussion in Section~\ref{secentropycoding}, while Lemma~\ref{prop2} is used in (\ref{finproof3}). 
The last inequality (\ref{finproof4}) follows from the fact that $\boldtheta_t^{\text{G}}$ is Gaussian with the same mean and covariance as $\boldtheta_t^{\text{NG}}$, and that among all distributions sharing the same mean and covariance, the Gaussian one has the largest rate-distortion function \cite[Problem 10.8]{CoverThomas}.

Next, notice that for $T=1,2,\cdots$, the following inequality holds for random variables in Figure~\ref{fig:threestage}.
\begin{subequations}
\label{diinov}
\begin{align}
&I(\bx^T\rightarrow \bu^T)\\
&=\sum\nolimits_{t=1}^T I(\bx_t; \by_t|\by^{t-1}) \label{diinov1}\\
&=\sum\nolimits_{t=1}^T I(\bx_t\!-\!\hat{\bx}_{t|t-1}; C(\bx_t\!-\!\hat{\bx}_{t|t-1})\!+\!\bv_t|\by^{t-1}) \label{diinov2}\\
&=\sum\nolimits_{t=1}^T I(\be_t;C\be_t+\bv_t|\by^{t-1}) \label{diinov3}\\
&=\sum\nolimits_{t=1}^T I(\be_t;C\be_t+\bv_t) \label{diinov4}\\
&\geq \sum\nolimits_{t=1}^T I(\boldtheta_t;\boldtheta_t+\bv_t). \label{diinov5}
\end{align}
\end{subequations} 
The identity (\ref{dixuxy}) is used in step (\ref{diinov1}). Equation (\ref{diinov2}) holds since $\hat{\bx}_{t|t-1}$ is measurable with respect to the $\sigma$-algebra generated by $\by^{t-1}$. 
Due to the property of the least mean square error estimation, $\be_t$ is independent of $\by^{t-1}$. Since $\bv_t$ is also independent of $\by^{t-1}$, (\ref{diinov4}) holds. 
Finally, (\ref{diinov5}) is a consequence of the data processing inequality since $(\boldtheta_t+\bv_t)$ -- $\be_t$ -- $\boldtheta_t$ form a Markov chain.
Finally, for $t=1,2,\cdots$,
\begin{align}
I(\boldtheta_t;\boldtheta_t+\bv_t)
&\geq 
\begin{cases}
\min_{\PP(u_t|\theta_t)}  & I(\boldtheta_t;\bu_t) \\
\text{s.t. } & \mathbb{E}\|\boldtheta_t-\bu_t\|^2 \leq D 
\end{cases} \nonumber \\
&= \mathsf{RDF}_{\boldtheta_t^{\text{G}}}(D) \label{ineqIRDF}
\end{align}
since $\bu_t=\boldtheta_i+\bv_t$ satisfies the distortion constraint.

Combining (\ref{finproof}), (\ref{diinov}) and (\ref{ineqIRDF}), for $T=1,2,\cdots$, we obtain
$$
\frac{1}{T} \!\sum_{t=1}^T \mathbb{E}(l_t) < \frac{1}{T} I(\bx^T\!\rightarrow \!\bu^T)+\frac{r}{2}\log \!\frac{4\pi e}{12}+1.
$$
\end{proof}

For SISO plants, the gap obtained above is read as $\frac{1}{2}\log\frac{4\pi e}{12}+1\approx 1.754$. This value should not be confused with the value $1.254$ obtained in \cite{silva2011}, since considered lower bounds are not equivalent. In \cite{silva2011}, the lower bound is written in terms of the directed information from $\bx_t^{\text{NG}}$ to $\bu_t^{\text{NG}}$, while in Theorem~\ref{theoupper}, it is written in terms of the directed information from $\bx_t^{\text{G}}$ to $\bu_t^{\text{G}}$. 

If Shannon-Fano coding is applied to entries of $\tilde{\bq}_t$ separately, we obtain a larger gap of $\frac{r}{2}\log \frac{4\pi e}{12}+r$ bits per time step. If an appropriate lattice (vector) quantizer is used, we obtain a tighter gap of $\frac{r}{2}\log 4\pi e G_r+1$, where $G_r$ is the normalized second moment of the lattice \cite{zamir1992universal}.

\ifdefined\LONGVERSION
\section*{APPENDIX}
\subsection*{Proof of Lemma~\ref{prop2}}

(a) For $i=1,\cdots,r$, let $\bx_i^*=\bx_i-\Delta_i \lfloor \frac{\bx_i}{\Delta_i}+\frac{1}{2}\rfloor$ be the nearest value to the origin selected from 
\[
\cdots, \bx_i-2\Delta_i,  \bx_i-\Delta_i, \bx_i, \bx_i+\Delta_i, \bx_i+2\Delta_i,  \cdots.
\]
Then,
\begin{align*}
\boldeta_i&=Q_{\Delta_i}(\bx_i+\boldxi_i)-\boldxi_i-\bx_i \\
&=\begin{cases}
\bx_i^*-\boldxi_i-\Delta_i & \text{ for } -\frac{\Delta_i}{2} \leq \boldxi_i \leq -\bx_i^*-\frac{\Delta_i}{2} \\
\bx_i^*-\boldxi_i & \text{ for } -\bx_i^*-\frac{\Delta_i}{2}  < \boldxi_i \leq -\bx_i^*+\frac{\Delta_i}{2} \\
\bx_i^*-\boldxi_i+\Delta_i & \text{ for } -\bx_i^*+\frac{\Delta_i}{2} < \boldxi_i \leq \frac{\Delta_i}{2}.
\end{cases}
\end{align*}
For any realization of $\bx_i$, this function maps a density $\mathcal{U}[-\frac{\Delta_i}{2}, \frac{\Delta_i}{2}]$ on the $\boldxi_i$-axis to a density $\mathcal{U}[-\frac{\Delta_i}{2}, \frac{\Delta_i}{2}]$ on the $\boldeta_i$-axis.
Thus, $\boldeta_i\sim \mathcal{U}[-\frac{\Delta_i}{2}, \frac{\Delta_i}{2}]$ and this is independent of $\bx_i$. Moreover, if $i\neq j$, then $\boldeta_i$ and $\boldeta_j$ are independent, since $\boldxi_i$ and $\boldxi_j$ are independent.

(b) It is straightforward to prove the first and the third equalities. Thus we prove the second equality. Denote by
\[
\tilde{q}_i^{k_i}=k_i\Delta_i, \; k_i \in \mathbb{Z}
\]
the $k_i$-th possible value that $\tilde{\bq}_i$ can take.
Given a realization $\xi$ of the dither random variable $\boldxi$, $\bq$ takes discrete values of the form
\[
q^k=\tilde{q}^k-\xi, \; k\in\mathbb{Z}^r
\]
where $q^k=(q_1^{k_1},\cdots,q_r^{k_r})$ and $\tilde{q}^k=(\tilde{q}_1^{k_1},\cdots,\tilde{q}_r^{k_r})$. To compute $H(\bq |\boldxi)$, notice that the p.m.f. of $\bq$ given $\boldxi$ is
\begin{align}
P(q^k|\xi)&=\text{Prob}\left\{ \bigwedge_{i=1}^r \left(\tilde{q}_i^{k_i}-\tfrac{\Delta_i}{2}\leq \bx_i+\xi_i \leq \tilde{q}_i^{k_i}+\tfrac{\Delta_i}{2} \right) \right\} \nonumber \\
&=\text{Prob}\left\{ \bigwedge_{i=1}^r \left(q_i^{k_i}-\tfrac{\Delta_i}{2}\leq \bx_i \leq q_i^{k_i}+\tfrac{\Delta_i}{2} \right) \right\} \nonumber \\
&= \int_{\bigtimes_{i=1}^r[q_i^{k_i}-\frac{\Delta_i}{2},q_i^{k_i}+\frac{\Delta_i}{2}]} f_\bx(x)dx \label{qpmf}
\end{align}
which is a p.d.f. $f_\bx(\cdot)$ integrated over an $r$-dimensional hypercube.
The p.d.f. of $\by=\bx+\bn$ is given by
\[
f_\by(y)=\int_{\mathbb{R}^r} f_\bx(x)f_\bn(y-x)dx.
\]
Notice that $f_\bn(y-x)=\prod_{i=1}^r\frac{1}{\Delta_i}$ if $y-x$ is in the hypercube $\times_{i=1}^r[-\frac{\Delta_i}{2},\frac{\Delta_i}{2}]$, or equivalently if $x$ is in $\times_{i=1}^r[y_i-\frac{\Delta_i}{2},y_i+\frac{\Delta_i}{2}]$, and is zero otherwise. Hence
\begin{equation}
f_\by(y)=\left(\prod\nolimits_{i=1}^r \frac{1}{\Delta_i}\right)
\int_{\bigtimes_{i=1}^r[y_i-\frac{\Delta_i}{2},y_i+\frac{\Delta_i}{2}]} f_\bx(x) dx. \label{ypdf}
\end{equation}
Comparing (\ref{qpmf}) and (\ref{ypdf}), we can write
\[
P(q^k|\xi)=(\prod\nolimits_{i=1}^r \Delta_i)f_\by(q^k)=(\prod\nolimits_{i=1}^r \Delta_i)f_\by(\tilde{q}^k-\xi). 
\]
Thus $H(\bq |\boldxi)$ is given by
\begin{align*}
&H(\bq |\boldxi)\\
&=\mathbb{E}_{\boldxi} \left[ -\sum\nolimits_{k\in\mathbb{Z}^k} P(q^k|\xi)\log P(q^k|\xi) \right]\\
&=\mathbb{E}_{\boldxi} \left[ -\sum\nolimits_{k\in\mathbb{Z}^k} \left(\prod\nolimits_{i=1}^r \Delta_i\right)f_\by(\tilde{q}^k-\xi)\log f_\by(\tilde{q}^k-\xi) \right] \\
&\hspace{3ex}-\sum\nolimits_{i=1}^r \log\Delta_i \\
&=\int_{\bigtimes_{i=1}^r[-\frac{\Delta_i}{2},\frac{\Delta_i}{2}]}\left[-\sum\nolimits_{k\in\mathbb{Z}^k} f_\by(\tilde{q}^k-\xi)\log f_
\by(\tilde{q}^k-\xi)\right]d\xi \\
&\hspace{3ex}-\sum\nolimits_{i=1}^r \log\Delta_i \\
&=-\int_{\mathbb{R}^r} f_\by(y)\log f_\by(y)dy-\sum\nolimits_{i=1}^r \log\Delta_i \\
&=h(\by)-\sum\nolimits_{i=1}^r \log\Delta_i.
\end{align*}
 
(c) Suppose $\PP(u|x)$ attains the rate-distortion function and let $\bu$ be defined by $\PP(x)$ and $\PP(u|x)$. (If the infimum is not attained, alternatively consider a sequence of random variables $\bu^k$ such that $\PP(u^k|x)$ asymptotically attains the infimum.)
Without loss of generality, assume $\bu$ is independent of $\boldeta$. Then
\begin{subequations}
\begin{align}
H(\tilde{\bq}|\boldxi)-I(\bx;\bu)&=I(\bx;\by)-I(\bx;\bu) \label{lemcproof1}\\
&=I(\bx;\by|\bu)-I(\bx;\bu|\by) \label{lemcproof2}\\
&\leq I(\bx;\by|\bu) \\
&=h(\by|\bu)-h(\boldeta) \\
&=h(\by-\bu|\bu)-h(\boldeta) \\
&\leq h(\by-\bu)-h(\boldeta) \\
&=I(\bx-\bu;\by-\bu) \\
&\leq \mathsf{C}_\Delta (D) \label{lemcproof3}
\end{align}
\end{subequations}
The result of part (b) is used in (\ref{lemcproof1}), and (\ref{lemcproof2}) holds since
\begin{align*}
I(\bx;\by)\!-\!I(\bx;\bu)&=h(\bx)-h(\bx|\by)-h(\bx)+h(\bx|\by) \\
&=h(\bx|\bu)-h(\bx|\by) \\
&=h(\bx|\bu)\!-\!h(\bx|\by,\!\bu)\!-\!h(\bx|\by)\!+\!h(\bx|\by,\!\bu) \\
&=I(\bx;\by|\bu)-I(\bx;\bu |\by).
\end{align*}
The final inequality (\ref{lemcproof3}) holds by definition of $\mathsf{C}_\Delta (D)$, since the random variable $\bx-\bu$ satisfies the power constraint by construction of $\bu$.

(d) The claim is directly shown by the following chain of inequalities.
\begin{subequations}
\begin{align}
&\mathsf{C}_\Delta (D)=\sup_{\PP(x):\mathbb{E}\|\bx\|^2\leq D} I(\bx;\by) \\
&=\sup_{\PP(x):\mathbb{E}\|\bx\|^2\leq D}  h(\by)-\sum\nolimits_{i=1}^r \log \Delta_i \\
&=\sup_{\PP(x):\mathbb{E}\|\bx\|^2\leq D} \sum\nolimits_{i=1}^r \left( h(\bx_i+\boldeta_i)\!-\!\log \Delta_i \right)\\
&<\!\!\!\!\!\!\max_{{\tiny\begin{array}{c}p_i>0 \\ \sum_{i=1}^rp_i=D \end{array}}} \!\! \sum_{i=1}^r \left(\frac{1}{2}\log 2\pi e \left(\frac{\Delta_i^2}{12}+p_i \right)-\log \Delta_i \right) \label{lemdproof1}\\
&=\frac{r}{2}\log\frac{2\pi e}{12}+\!\!\!\!\max_{{\tiny\begin{array}{c}p_i>0 \\ \sum_{i=1}^rp_i=D \end{array}}} \!\! \sum_{i=1}^r\frac{1}{2}\log\frac{(\Delta_i^2/12)+p_i}{(\Delta_i^2/12)} \\
&=\frac{r}{2}\log\frac{2\pi e}{12}+\frac{r}{2}\log 2 \label{lemdproof2}\\
&= \frac{r}{2}\log\frac{4\pi e}{12}
\end{align}
\end{subequations}
In step (\ref{lemdproof1}), we assumed that the power $p_i$ is allocated to $\bx_i$. Since the covariance of $\boldeta_i$ is $\tfrac{\Delta_i^2}{12}$, the covariance of $\bx_i+\boldeta_i$ is $\tfrac{\Delta_i^2}{12}+p_i$.
The entropy of $\bx_i+\boldeta_i$ is upper bounded by the entropy of the Gaussian random variable with the same covariance. However, since $\bx_i+\boldeta_i$ cannot be Gaussian, the upper bound (\ref{lemdproof1}) is strict.
Finally, (\ref{lemdproof2}) is a simple application of the log-sum inequality.

\begin{figure}[t]
    \centering
    \includegraphics[width=0.45\columnwidth]{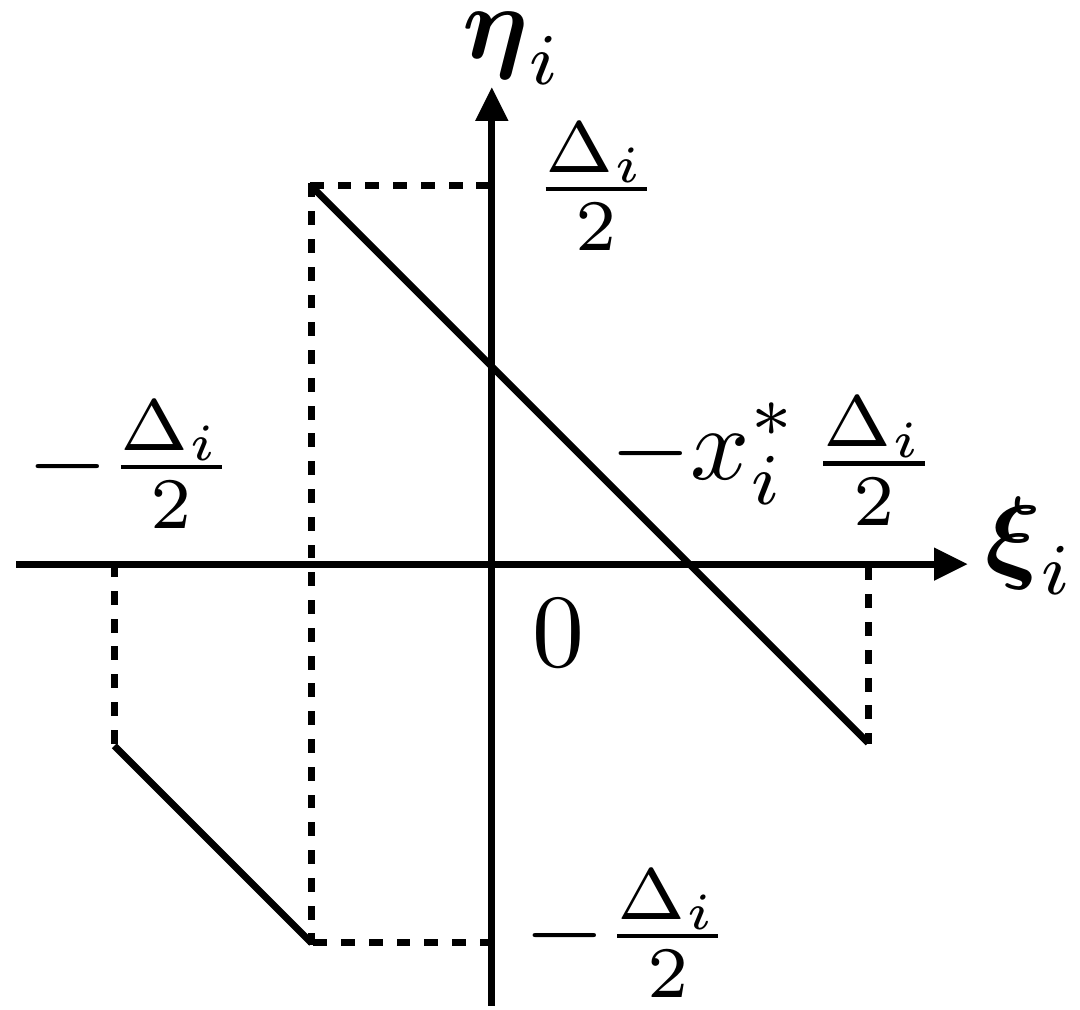} 
    \caption{Illustration of a map $\boldeta_i=Q_{\Delta_i}(x_i+\boldxi_i)-\boldxi_i-x_i$.}
    \label{fig:xieta}
    \vspace{-3ex}
\end{figure}

\fi


\bibliographystyle{IEEEtran}
\bibliography{ref}
\end{document}